\newtheorem{pro}{Proposition}[section]
\newtheorem{remark}{Remark}[section]
\newtheorem{theorem}{Theorem}[section]
\newtheorem{example}{Example}[section]
\begin{document}
%\input{mynewcds.tex}
%\baselineskip 32pt
%\large
\begin{center}
{\large \bf Totally umbilical radical transversal lightlike hypersurfaces of K\"ahler-Norden manifolds of constant totally real
sectional curvatures}
%\footnote{Supported by Scientific researches fund of "St.Cyril and St. Methodius" University of V.Tarnovo under contract RD-642-01/26.07.2010.}
\end{center}

\begin{center}{ Galia  NAKOVA } \end{center}

\begin{center}
{Department of Algebra and Geometry,  \\
Faculty of Mathematics and Informatics,\\
University of Veliko Tarnovo "St. Cyril and St. Methodius",\\
2 T.Tarnovski Str., 5003 Veliko Tarnovo, Bulgaria\\ 
E-mail: gnakova@gmail.com }
\end{center}
%\\www.university\_name.edu}

%\author{Gamma N. AUTHOR}

%\address{Group, Laboratory, Street,\\
%City, State ZIP/Zone, Country\\
%E-mail: an\_author@laboratory.com}
		%\vspace{.31cm}
				\begin{abstract}
In this paper we study curvature properties of semi-symmetric type of totally umbilical radical transversal lightlike 
		hypersurfaces $(M,g)$ and $(M,\widetilde g)$ of a K\"ahler-Norden manifold $(\overline M,\overline J,\overline g,\overline {     \widetilde g})$ of constant totally real sectional curvatures $\overline \nu$ and $\overline {\widetilde \nu}$
		($g$ and $\widetilde g$ are the induced metrics on $M$ by the Norden metrics $\overline g$ and $\overline {\widetilde g}$,
		respectively). We obtain a condition for $\overline {\widetilde \nu}$ (resp. $\overline \nu$) which is equivalent to each
		of the following conditions: $(M,g)$ $(resp.\, (M,\widetilde g))$ is locally symmetric, semi-symmetric, Ricci semi-symmetric 
		and almost Einstein. We construct an example of a totally umbilical radical transversal lightlike hypersurface, which is 
		locally symmetric, semi-symmetric, Ricci semi-symmetric and almost Einstein.
				\end{abstract}
 %\begin{center}
				{\bf MSC (2010):} 53C15, 53C40, 53C50. \\
				{\bf Keywords:} Radical transversal lightlike hypersurface, locally symmetric lightlike hypersurface, semi-symmetric 
		lightlike hypersurface, Ricci semi-symmetric lightlike hypersurface, Norden metric.

%\end{center}
%\maketitle
%\bodymatter

%%===========================================================
%\section{Introduction } A rational B\'ezier curve of second degree with normal parameterization is the well known from the geometric modeling curve
%\[
%%\hspace*{-2.4cm}
%\left\{
%\begin{array} l
%{\mathop{{\cal C}}\limits^{\mbox{\tiny(u)}}}_{\!\{A_0A_1A_2\}}:\vec{r}(t)=
%\frac{1}{B_2(u,t)}\Bigl( (1-t)^2,\, 2ut(1-t),\, t^2\Bigr) \left(\begin{matrix}\vec{a}_0 \cr \vec{a}_1 \cr \vec{a}_2 \end{matrix}\right) ,
%\\ 
%{} \\
%B_2(u,t)=2(1-u)t^2-2(1-u)t+1, \quad t\in [0,1]\, ,\,\, u\in \mathbb R,
%\end{array}\right. \leqno{(1)}
%\]
%where $\tr A_0A_1A_2$ is a fixed triangle in the projectively extended euclidean plane (real projective plane), which equation with respect to the orthonormal coordinate system ${\cal K}=\{O, \vec{a}_i=\overrightarrow{OA_i},\, i=0,1,2 \}$ is $x+y+z=1$. The geometric meaning of the normal parameterization (the parameter $u$) is well known: the shape of the curve (1) is
\section{Introduction}
Almost complex manifolds with Norden metric (or B-metric) were introduced 
by A. P. Norden \cite{Norden} and their geometry  has been investigated by G. Ganchev, K. Gribachev, A. Borisov, V. Mihova 
\cite{GB1, GGM} and other authors. There exists a difference between the geometry of an indefinite almost Hermitian manifold
and the geometry of an almost complex manifold with Norden metric. It arises because in the first case the almost complex 
structure $\overline J$ is an isometry with respect to the semi-Riemannian metric $\overline g$ and in the second case 
$\overline J$ is an anti-isometry with respect to $\overline g$. Due to this property of the couple $(\overline J,\overline g)$
of an almost complex manifold with Norden metric $\overline M$, there exists second Norden metric $\overline {\widetilde g}$ on
$\overline M$ which is defined by $\overline {\widetilde g}(X,Y)=\overline g(\overline JX,Y)$. Both metrics $\overline g$ and
$\overline {\widetilde g}$ are indefinite of a neutral signature. Thus, we can consider two induced metrics $g$ and 
$\widetilde g$ (by $\overline g$ and $\overline {\widetilde g}$, respectively) on a submanifold $M$ of $\overline M$. In 
\cite{GN1} we study submanifolds which are non-degenerate with respect to the one Norden metric and lightlike with respect to
the other one. In \cite{GN} we introduced such class of hypersurfaces, namely, radical transversal lightlike hypersurfaces of almost complex manifolds with Norden metric. As it is well known, in case $M$ is a lightlike submanifold of $\overline M$, a part
of the normal bundle $TM^\bot $ lies in the tangent bundle $TM$. Therefore the geometry of lightlike submanifolds is very different from the Riemannian and the semi-Riemannian geometry. The general theory of lightlike submanifolds has been developed 
by K. Duggal and A. Bejancu in \cite{D-B}. Many new classes of lightlike submanifolds of indefinite Kaehler, Sasakian, 
quaternion Kaehler manifolds are introduced by K. Duggal and B. Sahin in \cite{D-S} and different applications of lightlike geometry in the mathematical physics are given. 
\par An important class of lightlike hypersurfaces are the totally umbilical lightlike hypersurfaces. In \cite{D-B} it is proved
that there exist no totally umbilical lightlike real hypersurfaces of indefinite complex space forms $\overline M(c)$ with
$c\neq 0$. In this paper we consider totally umbilical radical transversal lightlike hypersurfaces of 
K\"ahler-Norden manifolds of constant totally real sectional curvatures $\overline \nu $ and $\overline {\widetilde \nu }$.
Our purpose is to study curvature conditions of semi-symmetric type for the considered hypersurfaces in case 
$(\overline \nu ,\overline {\widetilde \nu })\neq (0,0)$. Lightlike hypersurfaces which are semi-symmetric, Ricci semi-symmetric,
parallel or semi-parallel in a semi-Euclidean  space are investigated in \cite{S}. The curvatures of lightlike hypersurfaces
$M$ of an indefinite Kenmotsu manifold $\overline M$ under the conditions that $M$ is locally symmetric or $M$ is semi-symmetric
are studied in \cite{DHJ}.
\par 
The main results in this paper are given in the following theorem:

\begin{theorem}\label{thm 1.1}
Let $(M,g)$ $(resp.\, (M,\widetilde g))$ be a totally umbilical radical transversal lightlike hypersurface of a K\"ahler-Norden manifold $(\overline M,\overline J,\overline g,\overline {\widetilde g})$ $({\rm dim} \overline M=2n\geq 4)$ of constant 
totally real sectional curvatures $\overline \nu$ and $\overline {\widetilde \nu}$ with respect to $\overline g$ such that 
$\overline {\widetilde \nu}\neq 0$ $(resp.\, \overline \nu\neq 0)$. Then the following assertions are equivalent
\par $(i)$ $(M,g)$ $(resp.\, (M,\widetilde g))$ is semi-symmetric.
\par $(ii)$ $(M,g)$ $(resp.\, (M,\widetilde g))$ is Ricci semi-symmetric.
\par $(iii)$ $\displaystyle{\overline {\widetilde \nu}=\frac{\rho^2}{b}}$ 
$\left(resp.\, \, \displaystyle{\overline \nu=\frac{\widetilde {\rho}^2}{\widetilde b}}\right)$, where $\rho \, (resp. \,
\widetilde \rho)$ and $b \, (resp. \, \widetilde b)$ are the functions from 
\eqref{2.14} (resp. \eqref{3.20}) and \eqref{2.17} (resp. \eqref{3.19}).
\par
Moreover, if $\overline {\widetilde \nu}$ \, (resp. \, $\overline \nu $) is a constant, the assertions 
$(i), (ii), (iii)$ and the assertions 
\par $(iv)$ $(M,g)$ $(resp.\, (M,\widetilde g))$ is locally symmetric.
\par $(v)$ $(M,g)$ $(resp.\, (M,\widetilde g))$ is almost Einstein.\\
are equivalent.
\end{theorem}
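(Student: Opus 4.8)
The plan is to prove the three-way equivalence only for $(M,g)$, the argument for $(M,\widetilde g)$ being identical after interchanging the roles of $\overline\nu$ and $\overline{\widetilde\nu}$, and to reduce everything to the single scalar obstruction $\overline{\widetilde\nu}\,b-\rho^2$. I would begin from the explicit form of the induced curvature tensor $R$ of a totally umbilical radical transversal lightlike hypersurface, which follows by substituting the umbilicity relation $B=\rho g$ from \eqref{2.14} into the Gauss equation for lightlike hypersurfaces and then inserting the curvature tensor of a K\"ahler--Norden manifold of constant totally real sectional curvatures. Since the ambient curvature is algebraically determined by $\overline\nu$, $\overline{\widetilde\nu}$, $\overline g$ and $\overline J$, the resulting $R$ is a fixed linear combination of the tensors built from $g$, $\widetilde g=\overline g(\overline J\,\cdot\,,\cdot)|_M$ and $\rho$, with coefficients affine in $\overline\nu$, $\overline{\widetilde\nu}$ and $\rho^2$. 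Contracting this expression produces the induced Ricci tensor $S$, in which the function $b$ of \eqref{2.17} appears; this is the one preliminary computation everything else rests on.

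Two of the implications are then immediate and require no calculation. Because $S$ is a contraction of $R$ and the derivation $R(X,Y)\,\cdot\,$ commutes with contraction, the condition $R\cdot R=0$ contracts to $R\cdot S=0$, giving (i)$\Rightarrow$(ii); and since the induced connection is torsion-free, the Ricci identity yields $R\cdot R=0$ whenever $\nabla R=0$, giving (iv)$\Rightarrow$(i). The substance of the theorem is therefore contained in the two implications (ii)$\Rightarrow$(iii) and (iii)$\Rightarrow$(i), together with the parallel statements in the constant case.

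For (ii)$\Rightarrow$(iii) I would compute $(R(X,Y)\cdot S)(U,V)=-S(R(X,Y)U,V)-S(U,R(X,Y)V)$ directly from the explicit $R$ and $S$ obtained above. The point I expect to see is that, after full expansion, every term carrying the structure tensor or the metric collects into one fixed algebraic tensor multiplied by the scalar factor $\overline{\widetilde\nu}\,b-\rho^2$; hence $R\cdot S\equiv 0$ is equivalent to $\overline{\widetilde\nu}=\rho^2/b$, which is (iii). For (iii)$\Rightarrow$(i) I would substitute $\overline{\widetilde\nu}\,b=\rho^2$ back into the expression for $R$: the offending terms cancel and $R$ reduces to a curvature tensor of constant-curvature type, for which $R\cdot R=0$ holds by the standard algebraic identity. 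This closes the cycle (i)$\Rightarrow$(ii)$\Rightarrow$(iii)$\Rightarrow$(i).

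Finally, under the assumption that $\overline{\widetilde\nu}$ is constant I would adjoin (iv) and (v). When (iii) holds the coefficient $\rho^2/b=\overline{\widetilde\nu}$ is constant, so differentiating the collapsed expression for $R$ and using that $g$, $\widetilde g$ and $\overline J$ are parallel in the appropriate sense yields $\nabla R=0$, i.e.\ (iv); conversely (iv)$\Rightarrow$(i)$\Rightarrow$(iii) as above. For almost Einstein (v), the same substitution shows that $S$ becomes proportional to $g$ precisely when (iii) holds, the proportionality factor being constant under the constancy hypothesis, while the converse recovers (iii) by comparing coefficients. The main obstacle throughout is the bookkeeping of the degenerate directions: because $g$ is singular one must track separately how $R(X,Y)$ acts on the radical field $\xi$ and on the screen distribution, and one must verify that the algebraic tensor multiplying $\overline{\widetilde\nu}\,b-\rho^2$ is genuinely not identically zero, so that the vanishing of the product really forces the scalar relation (iii).
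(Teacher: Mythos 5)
Your proposal is correct in substance and rests on the same computational backbone as the paper --- namely the explicit formulas for $R$ and ${\rm Ric}$ of the totally umbilical hypersurface (the paper's \eqref{3.9} and \eqref{3.10}), and the reduction of every condition to the vanishing of the single scalar $a=\overline{\widetilde \nu}-\rho^2/b$ --- but your logical skeleton is genuinely leaner. The paper proves each of $(i),(ii),(iv),(v)$ equivalent to $(iii)$ by a separate direct expansion of ${\cal R}\cdot R$, ${\cal R}\cdot{\rm Ric}$, $\nabla R$ and the Einstein identity; you instead get $(iv)\Rightarrow(i)\Rightarrow(ii)$ for free (the Ricci identity for the torsion-free induced connection, and commutation of the derivation ${\cal R}(X,Y)$ with the trace defining $R^{(0,2)}$ --- both valid here since neither uses metricity of $\nabla$), and you get $(iii)\Rightarrow(i)$ and $(iii)\Rightarrow(iv)$ by observing that when $a=0$ the curvature collapses to the constant-curvature form $R(X,Y)Z=\overline{\widetilde \nu}\,[\widetilde g(X,Z)Y-\widetilde g(Y,Z)X]$ built from $\widetilde g$ alone. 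This buys a real economy: only the two extractions $(ii)\Rightarrow(iii)$ and $(v)\Rightarrow(iii)$ require honest computation.

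Three spots in your plan are thinner than they can afford to be. First, your ``preliminary computation'' must also deliver the facts the paper isolates in Theorem \ref{thm 3.2}: the transversal part of the Gauss equation forces $\overline\nu=0$ and the differential relations \eqref{3.7}--\eqref{3.8} for $\rho$; without $\overline\nu=0$ the curvature does not take the two-term form you describe. Second, for $(iii)\Rightarrow(iv)$ the induced metric $g$ is \emph{not} $\nabla$-parallel (see \eqref{2.12}); what saves you is that the $g$-terms of $R$ carry the factor $a$ and so disappear, and that $\widetilde g$ \emph{is} $\nabla$-parallel on a radical transversal hypersurface --- a computation you must actually carry out. Third, for $(v)\Rightarrow(iii)$, comparing coefficients on the screen distribution alone yields only $k=0$ and the constancy of $\rho^2/b$, not $a=0$; you must either evaluate the identity in the radical direction $(\xi,\xi)$, where $g(\xi,\xi)=0$ but $\widetilde g(\xi,\xi)=b\neq0$ (which does close the argument, and is arguably cleaner than the paper's route), or, as the paper does, feed the constancy of $\rho^2/b$ back into the relation $\tau=-\tfrac{1}{2b}db$ and the PDE \eqref{3.7}. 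Your closing remark about tracking the radical direction separately shows you are aware of exactly this issue, so I regard these as gaps in exposition rather than in the idea.
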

We note it is known that locally symmetric manifolds are semi-symmetric but the converse is not true. Moreover, the conditions
the manifold to be semi-symmetric and Ricci semi-symmetric are not equivalent in general.
An application of Theorem \ref{thm 1.1} is Example \ref{exa 5.1}, where we construct a totally umbilical radical transversal lightlike hypersurface, which is locally symmetric, semi-symmetric, Ricci semi-symmetric and almost Einstein.

\section{Preliminaries} 
\subsection{Almost complex manifolds with Norden metric}
A $2n$-dimensional semi-Riemannian manifold $\overline M$ is said to be {\it an almost complex manifold with Norden metric}
(or {\it an almost complex manifold with B-metric}) \cite{GB1} if it is equipped with an almost complex structure $\overline J$
and a semi-Riemannian metric $\overline g$ such that
$$
\overline J^2X=-X, \quad \overline g(\overline JX,\overline JY)=-\overline g(X,Y), \quad X, Y \in \Gamma (T\overline M).
$$
The tensor field $\overline {\widetilde g}$ on $\overline M$ defined by $\overline {\widetilde g}(X,Y)=
\overline g(\overline JX,Y)$ is a Norden metric on $\overline M$, which is said to be {\it an associated metric} of
$\overline M$. Both metrics $\overline g$ and $\overline {\widetilde g}$ are necessarily indefinite of signature $(n,n)$.
The Levi-Civita connection of $\overline g$ (resp. $\overline {\widetilde g}$) is denoted by $\overline \nabla $
(resp. $\overline {\widetilde \nabla }$). The tensor fields $F$ and $\Phi $ are defined by $F(X,Y,Z)=
\overline g((\overline \nabla _X\overline J)Y,Z)$ and $\Phi (X,Y)=\overline {\widetilde \nabla }_XY-\overline \nabla _XY$.
A classification of the almost complex manifolds with Norden metric with respect to the tensor $F$ is given in
\cite{GB1} and eight classes $W_i$ are obtained. These classes are characterized by conditions for the tensor $\Phi $
in \cite{GGM}. The class $W_0$ of the K\"ahler manifolds with Norden metric is determined by the condition $\overline J$
to be parallel with respect to the Levi-Civita connection $\overline \nabla$ of $\overline g$. The class $W_0$ is 
characterized by the following two equivalent conditions $F(X,Y,Z)=0$ and $\Phi (X,Y)=0$. We will call a manifold 
$(\overline M,\overline J,\overline g,\overline {\widetilde g})$ belonging to $W_0$ a K\"ahler-Norden manifold.

\subsection{Lightlike hypersurfaces of semi-Riemannian manifolds}
A hypersurface $M$ of an $(m+2)$-dimensional $(m>1)$ semi-Riemannian manifold $(\overline M,\overline g)$ is called 
{\it a lightlike hypersurface } \cite{D-B, D-S} if at any $p\in M$ the tangent space $T_pM$ and the normal space 
$T_pM^\bot $ have a non-empty intersection, which is denoted by ${\rm Rad} T_pM$. As ${\rm dim} \left(T_pM^\bot \right)=1$ it
follows that ${\rm dim} ({\rm Rad} T_pM)=1$ and ${\rm Rad} T_pM=T_pM^\bot$. The mapping ${\rm Rad} TM : p\in M\longrightarrow
{\rm Rad} T_pM$ defines a smooth distribution on $M$ of rank 1 which is called {\it a radical distribution} on $M$. Thus
the induced metric $g$ by $\overline g$ on a lightlike hypersurface $M$ has a constant rank $m$. There also exists a
non-degenerate complementary vector bundle $S(TM)$ of the normal bundle $TM^\bot $ in the tangent bundle $TM$, called 
{\it a screen distribution} on $M$. We have the following decomposition of $TM$
\begin{equation}\label{2.2}
TM=S(TM)\bot TM^\bot ,
\end{equation} 
 where $\bot $ denotes an orthogonal direct sum. Denote by ${\cal F}(M)$ the algebra of smooth functions on $M$ and
by $\Gamma (E)$ the ${\cal F}(M)$-module of smooth sections of a vector bundle $E$ over $M$. It is well known 
\cite[Theorem 1.1, p. 79]{D-B} that there exists a unique {\it transversal vector bundle} ${\rm tr}(TM)$ of rank 1 over
$M$, such that for any non-zero section $\xi $ of $TM^\bot $ on a coordinate neighbourhood $U\subset M$, there exists a
unique section $N$ of ${\rm tr}(TM)$ on $U$ satisfying
\begin{equation}\label{2.3}
\overline g(N,\xi )=1, \qquad \overline g(N,N)=\overline g(N,W)=0, \, \, \forall \, W \in \Gamma (S(TM)).
\end{equation}
Hence for any screen distribution $S(TM)$ we have a unique ${\rm tr}(TM)$, which is a lightlike complementary vector bundle
(but not orthogonal) to $TM$ in $T\overline M$, such that 
\begin{equation}\label{2.4}
T\overline M=TM\oplus {\rm tr}(TM)=S(TM)\bot \left(TM^\bot \oplus {\rm tr}(TM)\right),
\end{equation}
where $\oplus$ denotes a non-orthogonal direct sum.
\par
Let $\overline \nabla $ be the Levi-Civita connection of $\overline g$ on $\overline M$ and $P$ be the projection morphism
of $\Gamma (TM)$ on $\Gamma (S(TM))$ with respect to the decomposition \eqref{2.2}. The local Gauss and Weingarten formulas of 
$M$ and $S(TM)$ are given by
\begin{equation}\label{2.5}
\overline \nabla _XY=\nabla _XY+B(X,Y)N,
\end{equation}
\begin{equation}\label{2.6}
\overline \nabla _XN=-A_NX+\tau (X)N;  
\end{equation}
and
\begin{equation}\label{2.7}
\nabla _XPY=\nabla ^*_XPY+C(X,PY)\xi ,
\end{equation}
\begin{equation}\label{2.8}
\nabla _X\xi =-A^*_\xi X-\tau (X)\xi ,
\end{equation}
for any $X, Y \in \Gamma (TM)$, respectively. The induced connections $\nabla $ and $\nabla ^*$ on $TM$ and $S(TM)$, respectively,
are linear connections. $A_N$ and $A^*_\xi $ are the shape operators on $TM$ and $S(TM)$, respectively, $\tau $ is a 1-form
on $TM$. Both local second fundamental forms $B$ and $C$ are related to their shape operators by 
\begin{equation}\label{2.9}
B(X,Y)=g(A^*_\xi X,Y), \quad \overline g(A^*_\xi ,N)=0;
\end{equation}
\begin{equation}\label{2.10}
C(X,PY)=g(A_NX,PY), \quad \overline g(A_NX ,N)=0.
\end{equation}
Since $\overline \nabla $ is torsion-free, $\nabla $ is also torsion-free and $B$ is symmetric on $TM$. From \eqref{2.5}
we have $B(X,Y)=\overline g(\overline \nabla_XY,\xi )$, $\forall X, Y\in \Gamma (TM)$ which implies
\begin{equation}\label{2.11}
B(X,\xi )=0, \quad \forall X \in \Gamma (TM).
\end{equation}
From \eqref{2.9} and \eqref{2.11} it follows that the shape operator $A^*_\xi $ is $S(TM)$-valued, self-adjoint with respect
to $g$ and $A^*_\xi \xi =0$. In general, the induced connection $\nabla ^*$ on $S(TM)$ is not torsion-free. This fact and   
\eqref{2.10} show that the shape operator $A_N$ is not self-adjoint and it is $S(TM)$-valued. The linear connection $\nabla ^*$
is a metric connection on $S(TM)$ but $\nabla $ is not metric and satisfies
\begin{equation}\label{2.12}
(\nabla _Xg)(Y,Z)=B(X,Y)\eta (Z)+B(X,Z)\eta (Y),
\end{equation}
where $\eta $ is a 1-form given by
\begin{equation}\label{2.13}
\eta (X)=\overline g(X,N), \quad \forall X\in \Gamma (TM).
\end{equation}
A lightlike hypersurface $M$ is said to be {\it totally umbilical} \cite{D-B} if on any coordinate neighborhood $U$ there
exists a smooth function $\rho $ such that 
\begin{equation}\label{2.14}
B(X,Y)=\rho g(X,Y), \quad \forall X, Y\in \Gamma (TM_{|U}).
\end{equation}
According to \eqref{2.9} and \eqref{2.11} the condition \eqref{2.14} is equivalent to
\begin{equation}\label{2.15}
A^*_\xi (PX)=\rho PX, \quad X\in \Gamma (TM_{|U}).
\end{equation}
Denote by $\overline R$ and $R$ the curvature tensors of $\overline \nabla $ and $\nabla $, respectively. By using \eqref{2.5}
and \eqref{2.6} we get the Gauss equation of $M$
\begin{equation}\label{2.16}
\begin{array}{l}
{\overline R}(X,Y,Z)=R(X,Y,Z)+B(X,Z)A_NY-B(Y,Z)A_NX \\
+\left\{(\nabla _XB)(Y,Z)-(\nabla _YB)(X,Z)+\tau (X)B(Y,Z)-\tau (Y)B(X,Z)\right\}N,  
\end{array}
\end{equation}
for any $X,Y,Z\in \Gamma (TM)$. The induced Ricci type tensor $R^{(0,2)}$ of $M$ is defined by
$R^{(0,2)}(X,Y)={\rm trace}\{Z\longrightarrow R(X,Z)Y\}, \forall X, Y \in \Gamma (TM)$. In general, $R^{(0,2)}$ is not 
symmetric. According to \cite[Theorem 3.2, p. 99]{D-B}, a necessary and sufficient condition for the induced Ricci tensor to be symmetric is each 1-form $\tau $ to be closed, i.e. $d\tau =0$ on $M$.
Therefore $R^{(0,2)}$ is denoted by Ric \cite{D-S} only if the 1-form $\tau $ is closed.

\subsection{Radical transversal lightlike hypersurfaces of a K\"ahler-Norden manifold}
In \cite{GN} we introduced the class of radical transversal lightlike hypersurfaces of an almost complex manifold with Norden metric, which does not exist when the ambient manifold is an indefinite almost Hermitian manifold. 
Let $(M,g,S(TM))$ be a lightlike hypersurface of an almost complex manifold with Norden metric 
$(\overline M,\overline J,\overline g)$. We say that $M$ is a {\it radical transversal lightlike hypersurface} of $\overline M$
if $\overline J(TM^\bot )={\rm tr}(TM)$. For the considered hypersurfaces the following assertions are valid\cite{GN}:
1) $M$ is a radical transversal lightlike hypersurface of $\overline M$ if and only if the screen distribution $S(TM)$ is
holomorphic with respect to $\overline J$;
2) A radical transversal lightlike hypersurface $M$ of $\overline M$ has a unique screen distribution up to a semi-orthogonal
transformation and a unique transversal vector bundle.
\par
There exist two Norden metrics $\overline g$ and $\overline {\widetilde g}$ on an almost complex manifold with Norden metric 
$\overline M$. Hence we can consider two induced metrics $g$ and $\widetilde g$ on a hypersurface $M$ of $\overline M$ by
$\overline g$ and $\overline {\widetilde g}$, respectively. Denote by $(M,g)$ a non-degenerate hypersurface of 
$(\overline M,\overline J,\overline g,\overline {\widetilde g})$, whose normal vector field $\overline N$ is a space-like
unit $(\overline g(\overline N,\overline N)=1)$ or a time-like unit $(\overline g(\overline N,\overline N)=-1)$ and 
$\overline N$ is orthogonal to $\overline J\overline N$ with respect to $\overline g$. Then we proved \cite{GN} that 
$(M,g)$ is a non-degenerate hypersurface of $\overline M$ if and only if $(M,\widetilde g)$ is a radical transversal
lightlike hypersurface.
\par
Further, we consider a radical transversal lightlike hypersurface $(M,g,S(TM))$ of a K\"ahler-Norden manifold 
$(\overline M,\overline J,\overline g,\overline {\widetilde g})$. Let $\{\xi ,N\}$ be the pair on $(M,g)$ which satisfies
condition \eqref{2.3}. From the definition of $M$ we have 
\begin{equation}\label{2.17}
\overline J\xi =bN,
\end{equation}
where $b\in {\cal F}(\overline M)$. Taking into account \eqref{2.2} and \eqref{2.13}, for an arbitrary $X\in \Gamma (TM)$
we have the following decomposition $X=PX+\eta (X)\xi $. From the last equality and \eqref{2.17} we obtain
\begin{equation}\label{2.18}
\overline JX=\overline J(PX)+b\eta (X)N .
\end{equation}
Since $S(TM)$ is holomorphic with respect to $\overline J$, it follows that $\overline J(PX)$ belongs to $S(TM)$.
The shape operators $A^*_\xi $, $A_N$  and the corresponding local second fundamental forms $B$, $C$ are related as follows
\begin{equation}\label{2.19}
A^*_\xi X=-b\overline J(A_NX), \quad B(X,Y)=-bC(X,\overline J(PY)), \quad \forall X, Y\in \Gamma (TM). 
\end{equation}
Moreover, the metric linear connection $\nabla ^*$ of the considered hypersurfaces is such that the almost complex structure
$\overline J$ is parallel with respect to $\nabla ^*$, i.e.
\begin{equation}\label{2.20} 
(\nabla ^*_X\overline J)PY=0, \quad \forall X, Y\in \Gamma (TM).
\end{equation}
The 1-form $\tau $ is expressed by the function $b$ in the following way
\begin{equation}\label{2.21}
\tau (X)=-\frac{1}{2b}X(b),
\end{equation}
which means that $\tau $ is closed and consequently the induced Ricci tensor on $(M,g)$ is symmetric.

\section{Totally umbilical radical transversal lightlike hypersurfaces of a K\"ahler-Norden manifold of constant totally
real sectional curvatures}
The characteristic condition $\Phi =0$ of a $2n$-dimensional K\"ahler-Norden manifold $(\overline M,\overline J,\overline g,\overline {\widetilde g})$ implies that the Levi-Civita connections $\overline \nabla $ and $\overline {\widetilde \nabla }$
of $\overline M$ coincide. Hence the curvature tensors of type (1,3) $\overline R$ and $\overline {\widetilde R}$ of
$\overline \nabla $ and $\overline {\widetilde \nabla }$ coincide, too. Further, $\overline X,\overline Y,\overline Z,
\overline W$ (resp. $\overline x,\overline y,\overline z,\overline w$) will stand for arbitrary differentiable vector fields on
$\overline M$ (resp. vectors in $T_p\overline M, p\in \overline M$). The curvature tensors $\overline R$ and 
$\overline {\widetilde R}$ of type (0,4) are given by 
$\overline R(\overline X,\overline Y,\overline Z,\overline W)=\overline g(\overline R(\overline X,\overline Y,\overline Z),
\overline W)$ and
\begin{equation}\label{3.1}
\overline {\widetilde R}(\overline X,\overline Y,\overline Z,\overline W)=\overline {\widetilde g}(\overline {\widetilde R}
(\overline X,\overline Y,\overline Z),\overline W)=\overline R(\overline X,\overline Y,\overline Z,\overline J\overline W).
\end{equation}
From the condition $\overline \nabla \overline J=\overline J\overline \nabla$ it follows $\overline R(\overline X,\overline Y,\overline J\overline Z)=\overline J\overline R(\overline X,\overline Y,\overline Z)$. The last equality and the fact that
$\overline J$ is an anti-isometry with respect to $\overline g$ and $\overline {\widetilde g}$ give
\begin{equation}\label{3.2}
\overline R(\overline X,\overline Y,\overline J\overline Z,\overline J\overline W)=-\overline R(\overline X,\overline Y,
\overline Z,\overline W), \, \,  \overline {\widetilde R}(\overline X,\overline Y,\overline J\overline Z,\overline J\overline W)=
-\overline {\widetilde R}(\overline X,\overline Y,\overline Z,\overline W),
\end{equation}
i.e. $\overline R$ and $\overline {\widetilde R}$ are K\"ahler tensors. The following tensors are essential in the geometry
of the K\"ahler-Norden manifolds
\begin{equation}\label{3.3}
\begin{array}{lll}
\overline \pi _1(\overline X,\overline Y,\overline Z,\overline W)=\overline g(\overline Y,\overline Z)
\overline g(\overline X,\overline W)-\overline g(\overline X,\overline Z)\overline g(\overline Y,\overline W),\\
\overline \pi _2(\overline X,\overline Y,\overline Z,\overline W)=\overline g(\overline Y,\overline J\overline Z)
\overline g(\overline X,\overline J\overline W)-\overline g(\overline X,\overline J\overline Z)
\overline g(\overline Y,\overline J\overline W),\\
\overline \pi _3(\overline X,\overline Y,\overline Z,\overline W)=-\overline g(\overline Y,\overline Z)
\overline g(\overline X,\overline J\overline W)+\overline g(\overline X,\overline Z)
\overline g(\overline Y,\overline J\overline W)\\
\qquad \qquad \qquad \quad -\overline g(\overline X,\overline W)\overline g(\overline Y,\overline J\overline Z)+
\overline g(\overline Y,\overline W)\overline g(\overline X,\overline J\overline Z).
\end{array}
\end{equation}
By $\overline {\widetilde\pi }_i\, (i=1,2,3)$ are denoted the corresponding tensors with respect to $\overline {\widetilde g}$.
They are related with $\overline \pi _i\, (i=1,2,3)$  as follows
\begin{equation}\label{3.4}
\overline {\widetilde\pi }_1=\overline \pi _2, \quad \overline {\widetilde\pi }_2=\overline \pi _1, \quad
\overline {\widetilde\pi }_3=-\overline \pi _3.
\end{equation}
For every non-degenerate section $\alpha =span\{\overline x,\overline y\}$ with respect to $\overline g$ in $T_p\overline M$
the following two sectional curvatures $\overline \nu $ and $\overline {\widetilde \nu }$ are defined in \cite{GB2}
$$
\overline \nu (\alpha ;p)=\frac{\overline R(\overline x,\overline y,\overline y,\overline x)}
{\overline \pi _1(\overline x,\overline y,\overline y,\overline x)} , \quad
\overline {\widetilde \nu }(\alpha ;p)=\frac{\overline R(\overline x,\overline y,\overline y,\overline J\overline x)}
{\overline \pi _1(\overline x,\overline y,\overline y,\overline x)} .
$$
Analogously, if $\alpha =span\{\overline x,\overline y\}$ is a non-degenerate section with respect to $\overline {\widetilde g}$,
we can define two sectional curvatures $\overline \nu ^\prime $ and $\overline {\widetilde \nu }^\prime $ given by
$$
\overline \nu ^\prime (\alpha ;p)=\frac{\overline {\widetilde R}(\overline x,\overline y,\overline y,\overline x)}
{\overline {\widetilde\pi }_1(\overline x,\overline y,\overline y,\overline x)} , \quad
\overline {\widetilde \nu }^\prime (\alpha ;p)=
\frac{\overline {\widetilde R}(\overline x,\overline y,\overline y,\overline J\overline x)}
{\overline {\widetilde\pi }_1(\overline x,\overline y,\overline y,\overline x)} .
$$
A section $\alpha $ is said to be {\it holomorphic} if $\overline J\alpha =\alpha $ and its sectional curvature is called {\it a
holomorphic sectional curvature}. A section $\alpha $ is said to be {\it totally real} with respect to $\overline  g$ 
(resp. $\overline {\widetilde g}$) if $\overline J\alpha $ is orthogonal to $\alpha $ with respect to $\overline  g$ 
(resp. $\overline {\widetilde g}$). We will call the sectional curvatures $\overline \nu $ and $\overline {\widetilde \nu }$
(resp. $\overline \nu ^\prime $ and $\overline {\widetilde \nu }^\prime $) of a non-degenerate totally real section with 
respect to $\overline  g$  (resp. $\overline {\widetilde g}$) {\it totally real sectional curvatures} with respect to
$\overline  g$ (resp. $\overline {\widetilde g}$). It is well known that {\it an indefinite complex space form} is a connected indefinite K\"ahler manifold $\overline M$ of constant holomorphic sectional curvature $c$ and it is denoted by $\overline M(c)$. When $\overline M$ is a K\"ahler-Norden manifold the property \eqref{3.2} gives that all of the holomorphic sectional curvatures   are zero. Therefore the sectional curvatures of the totally real sections are important in our considerations. 
\begin{theorem}\label{thm 3.1}
\cite{GB2} Let $(\overline M,\overline J,\overline g,\overline {\widetilde g})$ be a $2n$-dimensional ($2n\geq 4$)
K\"ahler-Norden manifold. $\overline M$ is of constant totally real sectional curvatures $\overline \nu $ and 
$\overline {\widetilde \nu }$ with respect to $\overline g$, i.e. $\overline \nu (\alpha ;p)=\overline \nu (p)$,
$\overline {\widetilde \nu }(\alpha ;p)=\overline {\widetilde \nu }(p)$ \, ($p\in \overline M$), if and only if 
\begin{equation}\label{3.5}
\overline R=\overline \nu[\overline \pi _1-\overline \pi _2]+\overline {\widetilde \nu }\overline \pi _3 .
\end{equation}
Both functions $\overline \nu $ and $\overline {\widetilde \nu }$ are constants if $\overline M$ is connected and $2n\geq 6$.
\end{theorem}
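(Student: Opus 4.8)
The plan is to prove Theorem \ref{thm 3.1} in the direction that requires work, namely the implication that constancy of the two totally real sectional curvatures forces the pointwise expression \eqref{3.5}; the reverse implication is a direct computation. The key structural fact I would exploit is that $\overline R$ is a K\"ahler tensor, so it satisfies the symmetries \eqref{3.2} in addition to the usual curvature symmetries. Combined with the vanishing of all holomorphic sectional curvatures (which follows from \eqref{3.2} by putting $\overline Z=\overline X$, $\overline W=\overline Y$ after $\overline J$-substitution), this severely constrains the space of algebraic curvature tensors that $\overline R$ can inhabit. My strategy is to show that the three tensors $\overline\pi_1$, $\overline\pi_2$, $\overline\pi_3$ span the subspace of K\"ahler-type algebraic curvature tensors whose sectional curvatures on totally real sections are isotropic, and then read off the coefficients.

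First I would set up the standard polarization argument. Fix $p$ and work in the tangent space $T_p\overline M$ with its Norden metric. The hypotheses say $\overline\nu(\alpha;p)$ and $\overline{\widetilde\nu}(\alpha;p)$ are independent of the totally real section $\alpha$. Writing these out via the defining quotients, I get, for every pair $\overline x,\overline y$ spanning a non-degenerate totally real section,
\begin{equation}\label{eq:plan1}
\overline R(\overline x,\overline y,\overline y,\overline x)=\overline\nu\,\overline\pi_1(\overline x,\overline y,\overline y,\overline x),\qquad \overline R(\overline x,\overline y,\overline y,\overline J\overline x)=\overline{\widetilde\nu}\,\overline\pi_1(\overline x,\overline y,\overline y,\overline x).
\end{equation}
The plan is to polarize each of these identities. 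Because the relations only hold a priori on totally real sections, I must be careful: I would first verify that the bilinear-in-each-slot consequences extend off the totally real locus by a density/continuity argument, or equivalently work with the difference tensor $\overline T=\overline R-\overline\nu[\overline\pi_1-\overline\pi_2]-\overline{\widetilde\nu}\,\overline\pi_3$ and show that $\overline T$ is an algebraic K\"ahler curvature tensor all of whose totally real sectional curvatures (both the $\overline\nu$- and $\overline{\widetilde\nu}$-type) vanish, hence $\overline T=0$.

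The core of the argument is the lemma that a K\"ahler algebraic curvature tensor $\overline T$ with vanishing totally real sectional curvatures of both kinds must vanish identically. I would prove this by combining \eqref{eq:plan1} for $\overline T$ (with zero right-hand sides) with the K\"ahler symmetries \eqref{3.2}. The vanishing of $\overline T(\overline x,\overline y,\overline y,\overline x)$ on all totally real $\overline x,\overline y$ polarizes, using the first Bianchi identity and the pair-symmetry $\overline T(\overline x,\overline y,\overline z,\overline w)=\overline T(\overline z,\overline w,\overline x,\overline y)$, to the statement that $\overline T(\overline x,\overline y,\overline z,\overline w)=0$ whenever the four vectors lie in a suitable totally real configuration; the K\"ahler relation then propagates this to the holomorphic directions, since any vector decomposes against its $\overline J$-image. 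The second vanishing, $\overline T(\overline x,\overline y,\overline y,\overline J\overline x)=0$, supplies exactly the components that mix a vector with its $\overline J$-image and is what kills the $\overline\pi_3$-type part. Assembling both polarizations yields $\overline T\equiv 0$, giving \eqref{3.5}.

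The main obstacle I anticipate is the bookkeeping in the polarization step, specifically handling the constraint that \eqref{eq:plan1} is only assumed on \emph{totally real} and \emph{non-degenerate} sections rather than on all pairs. In the indefinite Norden setting one cannot simply substitute arbitrary vectors, because the totally real and non-degeneracy conditions cut out a proper (though open and dense) subset of two-planes, and null directions abound. I would resolve this by first establishing that the totally real non-degenerate sections are dense in the Grassmannian of two-planes and that $\overline T$ is continuous (being polynomial in its arguments), so that any polynomial identity valid on a dense set holds everywhere; then the formal polarization identities become legitimate. The final reading-off of the coefficients $\overline\nu$ and $\overline{\widetilde\nu}$ is immediate once $\overline T=0$, and the concluding remark that both functions are constant for connected $\overline M$ with $2n\geq 6$ would follow from a second Bianchi (differentiated) argument, which I would cite from \cite{GB2} rather than reproduce.
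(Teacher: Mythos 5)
The paper does not prove Theorem \ref{thm 3.1}; it is quoted verbatim from \cite{GB2}, so there is no in-paper argument to compare against. Your overall architecture is nevertheless the classically correct one: check the easy direction by evaluating $\overline\pi_1-\overline\pi_2$ and $\overline\pi_3$ on totally real sections, and for the converse pass to the difference tensor $\overline T=\overline R-\overline\nu[\overline\pi_1-\overline\pi_2]-\overline{\widetilde\nu}\,\overline\pi_3$ and prove that a K\"ahler algebraic curvature tensor whose totally real sectional curvatures of both kinds vanish must be zero. (A small slip along the way: $\overline\pi_1$ and $\overline\pi_2$ are not individually K\"ahler tensors --- one has $\overline\pi_1(\overline X,\overline Y,\overline J\overline Z,\overline J\overline W)=\overline\pi_2(\overline X,\overline Y,\overline Z,\overline W)$ --- so the relevant space is spanned by $\overline\pi_1-\overline\pi_2$ and $\overline\pi_3$, not by the three tensors separately.)

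There is, however, a genuine gap in the one step you yourself flag as the main obstacle. Totally real non-degenerate sections are \emph{not} dense in the Grassmannian of two-planes: a plane $\mathrm{span}\{\overline x,\overline y\}$ is totally real with respect to $\overline g$ precisely when $\overline{\widetilde g}(\overline x,\overline x)=\overline{\widetilde g}(\overline x,\overline y)=\overline{\widetilde g}(\overline y,\overline y)=0$, three polynomial equations cutting out a proper closed subvariety (generically of codimension three) of the Grassmannian. A density/continuity argument therefore cannot extend the polarized identities off the totally real locus; the only legitimate density statement is the much weaker one that non-degenerate totally real planes are dense among totally real planes. The real content of the key lemma is exactly the algebraic propagation you only gesture at: for arbitrary vectors one must decompose them against their $\overline J$-images and use the K\"ahler symmetry \eqref{3.2} together with the first Bianchi identity to express every component of $\overline T$ through values on totally real configurations, with the second hypothesis $\overline T(\overline x,\overline y,\overline y,\overline J\overline x)=0$ entering essentially to kill the $\overline\pi_3$-type components. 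Until that computation is actually carried out, the forward implication is not proved. Deferring the Schur-type constancy statement for $2n\geq 6$ to \cite{GB2} is reasonable.
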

\begin{remark}\label{rem 3.1}
Suppose $\overline M$ is of constant totally real sectional curvatures $\overline \nu $ and $\overline {\widetilde \nu }$ with respect to $\overline g$. Then from \eqref{3.5} by using \eqref{3.1}, \eqref{3.3} and \eqref{3.4} we obtain \\
%\begin{equation}\label{3.6}
$\overline {\widetilde R}=-\overline {\widetilde \nu }[\overline {\widetilde \pi} _1-\overline {\widetilde \pi } _2]
+\overline \nu \overline {\widetilde \pi } _3$.
%\end{equation}
Then from Theorem \ref{thm 3.1} it follows that $\overline M$ is of
constant totally real sectional curvatures $\overline \nu ^\prime =-\overline {\widetilde \nu }$ and 
$\overline {\widetilde \nu }^\prime =\overline \nu$ with respect to $\overline {\widetilde g}$.
\end{remark}
\begin{theorem}\label{thm 3.2}
Let $(M,g)$ be a totally umbilical radical transversal lightlike hypersurface of a K\"ahler-Norden manifold 
$(\overline M,\overline J,\overline g,\overline {\widetilde g})$ (${\rm dim} \overline M=2n\geq 4$) of constant totally
real sectional curvatures $\overline \nu $ and $\overline {\widetilde \nu }$ with respect to $\overline g$.
Then $\overline \nu =0$ and $\rho $ from \eqref{2.14} satisfies the partial
differential equations 
\begin{equation}\label{3.7}
b\overline {\widetilde \nu }-\rho ^2+\xi (\rho )+\rho \tau (\xi )=0,
\end{equation}
\begin{equation}\label{3.8}
PX(\rho )+\rho \tau (PX)=0, \quad \forall X\in \Gamma (TM),
\end{equation}
where $b$ is the function from \eqref{2.17}.
Moreover, the curvature tensor $R$ and the Ricci tensor Ric of $(M,g)$ are given by 
\begin{equation}\label{3.9}
\begin{array}{ll}
R(X,Y,Z)=\displaystyle{\left(\overline {\widetilde \nu }-\frac{\rho ^2}{b}\right)\left[g(X,Z)\overline J(PY)-g(Y,Z)\overline J(PX)\right]}\\
\qquad \qquad \quad +\overline {\widetilde \nu }\left[\overline g(X,\overline JZ)Y-\overline g(Y,\overline JZ)X\right],
\end{array}
\end{equation}
\begin{equation}\label{3.10}
{\rm Ric}(X,Y)=-2(n-1)\overline {\widetilde \nu }{\widetilde g}(X,Y)+
\displaystyle{\left(\overline {\widetilde \nu }-\frac{\rho ^2}{b}\right){\widetilde g}(PX,PY)},
\end{equation}
for any $X, Y, Z\in \Gamma (TM)$.
\end{theorem}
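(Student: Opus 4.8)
The plan is to start from the Gauss equation \eqref{2.16} and substitute the explicit form of the ambient curvature tensor $\overline R$ given by \eqref{3.5} in Theorem~\ref{thm 3.1}. On the left-hand side I would compute $\overline R(X,Y,Z)$ for tangent vectors $X,Y,Z\in\Gamma(TM)$ using $\overline R=\overline\nu[\overline\pi_1-\overline\pi_2]+\overline{\widetilde\nu}\,\overline\pi_3$, expanding each $\overline\pi_i$ via the definitions \eqref{3.3}. The totally umbilical condition \eqref{2.14}, i.e. $B(X,Y)=\rho\,g(X,Y)$, simplifies the right-hand side considerably: the quadratic terms $B(X,Z)A_NY-B(Y,Z)A_NX$ become $\rho[g(X,Z)A_NY-g(Y,Z)A_NX]$, and the normal ($N$-)component involves $(\nabla_XB)(Y,Z)-(\nabla_YB)(X,Z)+\tau(X)B(Y,Z)-\tau(Y)B(X,Z)$, which upon inserting $B=\rho g$ and using the non-metricity \eqref{2.12} reduces to expressions in $X(\rho),Y(\rho)$ and $\tau$.

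Next I would \textbf{separate the tangential and transversal parts}. Matching the $N$-component of both sides yields the differential constraints: taking $Z=\xi$ (so that $g(Y,\xi)=\eta(Y)$, etc.) and using \eqref{2.11}, \eqref{2.12} should isolate equation \eqref{3.8}, $PX(\rho)+\rho\tau(PX)=0$, while a suitable contraction or the choice involving $\xi$ in the remaining slot should produce \eqref{3.7}, $b\overline{\widetilde\nu}-\rho^2+\xi(\rho)+\rho\tau(\xi)=0$. To get $\overline\nu=0$, I expect to test the $N$-component (or an orthogonality relation) against a direction where the $\overline\nu$-terms survive but everything else vanishes; since $\overline R(X,Y,Z)$ must have its transversal component accounted for entirely by the bracketed $\{\cdots\}N$ term and the ambient space forces holomorphic sectional curvatures to vanish (as remarked after Theorem~\ref{thm 3.1}), the coefficient of $\overline\nu$ in the relevant projection cannot be balanced unless $\overline\nu=0$. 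Concretely, I would use \eqref{2.17}--\eqref{2.18} to rewrite $\overline JX=\overline J(PX)+b\eta(X)N$ and feed this through $\overline\pi_2,\overline\pi_3$; the $\overline\nu\overline\pi_2$ contribution contains factors like $\overline g(X,\overline JZ)$ that, when projected appropriately, force $\overline\nu=0$.

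For the \textbf{curvature tensor formula \eqref{3.9}}, I would collect the purely tangential part of the Gauss equation after setting $\overline\nu=0$, so $\overline R=\overline{\widetilde\nu}\,\overline\pi_3$ on $TM$. Using \eqref{2.10} for $A_N$ together with the relation \eqref{2.19} connecting $A_N$ to $A^*_\xi$ (and hence to $\rho$ via \eqref{2.15}), and the decomposition $\overline JX=\overline J(PX)+b\eta(X)N$, the terms $\rho[g(X,Z)A_NY-g(Y,Z)A_NX]$ should combine with the $\overline{\widetilde\nu}\,\overline\pi_3$ contribution. The key computation is recognizing that $A_NY$ produces a $\frac{\rho}{b}\overline J(PY)$-type term (from inverting \eqref{2.19} and \eqref{2.15}), which is exactly what generates the coefficient $\left(\overline{\widetilde\nu}-\frac{\rho^2}{b}\right)$ multiplying $[g(X,Z)\overline J(PY)-g(Y,Z)\overline J(PX)]$, while the remaining $\overline{\widetilde\nu}$-terms assemble into $\overline{\widetilde\nu}[\overline g(X,\overline JZ)Y-\overline g(Y,\overline JZ)X]$. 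Finally, the \textbf{Ricci formula \eqref{3.10}} follows by contracting \eqref{3.9}: computing $\mathrm{Ric}(X,Y)=\mathrm{trace}\{Z\mapsto R(X,Z)Y\}$ over a suitable quasi-orthonormal frame $\{\xi,N,\text{screen basis}\}$ adapted to $S(TM)$, where $\widetilde g(X,Y)=\overline g(\overline JX,Y)$ naturally appears through the $\overline J$-factors and the trace over the $2(n-1)$-dimensional screen distribution yields the factor $-2(n-1)\overline{\widetilde\nu}$.

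\textbf{The hard part} will be the careful bookkeeping in separating tangential from transversal components when $\overline J$ mixes $TM^\bot$ and $\mathrm{tr}(TM)$ via \eqref{2.17}; one must track every $\eta(\cdot)$ and $b$ factor correctly, especially in the trace for \eqref{3.10}, where the non-self-adjointness of $A_N$ and the degeneracy of $g$ on $\mathrm{Rad}\,TM$ require using the screen basis rather than a naive orthonormal frame. I expect establishing $\overline\nu=0$ cleanly to be the most delicate single step, since it hinges on correctly identifying which projection of the Gauss equation's $\overline\nu$-contribution has no compensating term.
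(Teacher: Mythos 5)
Your overall strategy is the paper's: substitute the space-form expression \eqref{3.5} into the Gauss equation \eqref{2.16}, read off the transversal ($N$-)component to get the PDEs and $\overline\nu=0$, and read off the tangential component together with $A_N(PX)=\frac{\rho}{b}\overline J(PX)$ (from \eqref{2.15} and \eqref{2.19}) to get \eqref{3.9}. That part of the plan is sound and is exactly how the paper proceeds.

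There is, however, one step that fails as you describe it. You propose to isolate \eqref{3.8} by ``taking $Z=\xi$ (so that $g(Y,\xi)=\eta(Y)$)''. This identity is false: $\xi$ spans ${\rm Rad}\,TM=TM\cap TM^{\bot}$, so $g(Y,\xi)=0$ for \emph{every} $Y\in\Gamma(TM)$, while $\eta(Y)=\overline g(Y,N)$ involves the transversal section $N$, not $\xi$. Consequently, setting $Z=\xi$ in the transversal component of the Gauss equation annihilates both sides and yields only $0=0$. The paper instead puts $\xi$ in the \emph{second} slot and screen vectors in the others ($X=PX$, $Y=\xi$, $Z=PZ$): this produces $b\overline\nu\, g(PX,\overline J(PZ))+b\overline{\widetilde\nu}\,g(PX,PZ)=[\rho^2-\xi(\rho)-\rho\tau(\xi)]\,g(PX,PZ)$, and the pointwise linear independence of $PZ$ and $\overline J(PZ)$ in the non-degenerate screen forces \emph{both} $\overline\nu=0$ and \eqref{3.7} at once — this, rather than a separate ``holomorphic curvatures vanish'' argument, is the mechanism that kills $\overline\nu$. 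Equation \eqref{3.8} then comes from taking all three arguments in $S(TM)$, which gives $[PX(\rho)+\rho\tau(PX)]PY=[PY(\rho)+\rho\tau(PY)]PX$ and needs the dimension count ${\rm dim}\,S(TM)=2n-2\geq 2$ to conclude. Your plan is missing these two substitutions and the accompanying linear-independence/dimension arguments; without them the transversal component gives you nothing.

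On the Ricci tensor you genuinely diverge from the paper: the paper invokes the Duggal--Sahin formula \eqref{3.17} for $R^{(0,2)}$ of a lightlike hypersurface (using ${\rm tr}A_N=0$ and the ambient Ricci \eqref{3.18}), whereas you propose a direct trace of \eqref{3.9} over a quasi-orthonormal frame. The direct trace does work and is more self-contained, provided you use the correct dual frame (the covector dual to $\xi$ is $\eta$, not $g(\cdot,\xi)$, which again vanishes identically) and observe that ${\rm tr}\bigl(\overline J|_{S(TM)}\bigr)=0$; you correctly flag the degeneracy of $g$ as the delicate point, but your $g(\cdot,\xi)$ slip above suggests this is precisely where your bookkeeping would go wrong.
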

\begin{proof}
By using \eqref{2.16}, \eqref{3.5}, \eqref{2.17} and \eqref{2.14} we obtain
%\begin{equation}\label{3.11}
$$
\begin{array}{lll}
b\overline \nu \left[-\overline g(Y,\overline JZ)\eta (X)+\overline g(X,\overline JZ)\eta (Y)\right]+
b\overline {\widetilde \nu }\left[-g(Y,Z)\eta (X)+g(X,Z)\eta (Y)\right]\\
=X(\rho )g(Y,Z)-Y(\rho )g(X,Z)\\
+\rho \left[(\nabla _Xg)(Y,Z)-(\nabla _Yg)(X,Z)+\tau (X)g(Y,Z)-\tau (Y)g(X,Z)\right],\, \forall X, Y, Z\in \Gamma (TM).
\end{array}
%\end{equation}
$$
Taking into account \eqref{2.12} and \eqref{2.14}, the above equality becomes
\begin{equation}\label{3.12}
\begin{array}{ll}
b\overline \nu \left[-\overline g(Y,\overline JZ)\eta (X)+\overline g(X,\overline JZ)\eta (Y)\right]+
b\overline {\widetilde \nu }\left[-g(Y,Z)\eta (X)+g(X,Z)\eta (Y)\right]\\
=X(\rho )g(Y,Z)-Y(\rho )g(X,Z)\\
+\rho \left[\rho g(X,Z)\eta (Y)-\rho g(Y,Z)\eta (X)+\tau (X)g(Y,Z)-\tau (Y)g(X,Z)\right].
\end{array}
\end{equation}
Replacing $X, Y$ and $Z$ in \eqref{3.12} by $PX, \xi $ and $PZ$, respectively, we get
$$
b\overline \nu g(PX,\overline J(PZ))+b\overline {\widetilde \nu }g(PX,PZ)=\left[\rho ^2-\xi (\rho )-\rho \tau (\xi )\right]
g(PX,PZ).
$$
Because $S(TM)$ is non-degenerate we have
$$
\left[b\overline {\widetilde \nu }-\rho ^2+\xi (\rho )+\rho \tau (\xi )\right]PZ+b\overline \nu \overline J(PZ)=0. 
$$
Since $PZ$ and $\overline J(PZ)$ are linearly independent and $b\neq 0$, we obtain \eqref{3.7} and $\overline \nu =0$.
Now if we take $X=PX, Y=PY, Z=PZ$ in \eqref{3.12} and by using $S(TM)$ is non-degenerate we have
\begin{equation}\label{3.13}
\left[PX(\rho )+\rho \tau (PX)\right]PY=\left[PY(\rho )+\rho \tau (PY)\right]PX.
\end{equation}
Suppose there exists a vector field $X_0\in \Gamma (TM_{|U})$ such that $PX_0(\rho )+\rho \tau (PX_0)\neq 0$ at a point
$p\in M$. Then from \eqref{3.13} it follows that all vectors from $S(TM)$ are collinear with $(PX_0)_p$. This is a 
contradiction because ${\rm dim}S(TM)=2n-2\geq 2$. Hence \eqref{3.8} is true at any point $p\in M$. Next,
by using \eqref{3.5} and taking into account that $\overline \nu =0$ and \eqref{2.18}, we obtain 
\begin{equation}\label{3.15}
\begin{array}{ll}
\overline R(X,Y,Z)=\overline {\widetilde \nu }\left[-g(Y,Z)\overline J(PX)+g(X,Z)\overline J(PY)\right. \\
\qquad \qquad \quad \left.-\overline g(Y,\overline JZ)X+\overline g(X,\overline JZ)Y\right].
\end{array}
\end{equation}
The equalities \eqref{2.15} and \eqref{2.19} imply
\begin{equation}\label{3.16}
A_NPX=\frac{\rho }{b}\overline J(PX), \quad \forall X\in \Gamma (TM).
\end{equation}
So, \eqref{3.9} follows from \eqref{2.16}, \eqref{3.15}, \eqref{3.16} and \eqref{2.14}. Finally, we obtain \eqref{3.10} by
using the expression of the Ricci tensor $R^{(0,2)}$ of a lightlike hypersurface \cite{D-S}, given by
\begin{equation}\label{3.17}
R^{(0,2)}(X,Y)=\overline {\rm Ric}(X,Y)+B(X,Y){\rm tr}A_N-g(A_NX,A^*_\xi Y)-\overline g(R(\xi ,Y,X),N),
\end{equation}
where $\overline {\rm Ric}$ is the Ricci tensor of $\overline M$. From \eqref{3.16} it follows that ${\rm tr}A_N=0$. 
By using \eqref{3.5} and $\overline \nu =0$ we find
\begin{equation}\label{3.18}
\overline {\rm Ric}(X,Y)=-2(n-1)\overline {\widetilde \nu }\overline g(X,\overline JY), \quad \forall X, Y\in \Gamma (TM).
\end{equation}
Substituting \eqref{3.18} in \eqref{3.17} and taking into account \eqref{2.15}, \eqref{3.9} and \eqref{3.16} we get \eqref{3.10}.
\end{proof}
Now, let $(M,{\widetilde g})$ be a radical transversal lightlike hypersurface of a K\"ahler-Norden manifold 
$(\overline M,\overline J,\overline g,\overline {\widetilde g})$. Denote by $\widetilde {TM}, \widetilde {TM}^{\widetilde {\bot }},
S(\widetilde {TM})$ and ${\rm tr}(\widetilde {TM})$ the tangent bundle, the normal bundle, the screen distribution and the
transversal vector bundle of $(M,{\widetilde g})$, respectively. Let $\{\xi ,N\}$ be the pair of vector fields 
$\xi \in \Gamma (\widetilde {TM}^{\widetilde {\bot }})$, $N\in \Gamma ({\rm tr}(\widetilde {TM}))$ satisfying 
\begin{equation}\label{3.19}
\overline J\xi =\widetilde bN, \, \, 
\overline {\widetilde g}(N,\xi )=1,\, \, \overline {\widetilde g}(N,N)=\overline {\widetilde g}(N,W)=0, \, \, \forall W\in 
\Gamma (S(\widetilde {TM})).
\end{equation}  
The local second fundamental form $\widetilde B$ and the 1-form $\widetilde \tau $ are given by 
\begin{equation}\label{3.20}
\widetilde B(X,Y)=\widetilde  \rho \widetilde g(X,Y), \quad  \widetilde \tau (X)=-\frac{1}{2\widetilde b}X(\widetilde b),
\quad \forall X, Y\in \Gamma (\widetilde {TM}).
\end{equation}
Taking into account Remark \ref{rem 3.1}, the following theorem follows in a similar way as Theorem \ref{thm 3.2} 
\begin{theorem}\label{thm 3.3}
Let $(M,\widetilde g)$ be a totally umbilical radical transversal lightlike hypersurface of a K\"ahler-Norden manifold 
$(\overline M,\overline J,\overline g,\overline {\widetilde g})$ (${\rm dim} \overline M=2n\geq 4$) of constant totally
real sectional curvatures $\overline \nu $ and $\overline {\widetilde \nu }$ with respect to $\overline g$. 
Then $\overline {\widetilde \nu} =0$ and the function $\widetilde \rho $ 
from \eqref{3.20} satisfies the partial differential equations 
\begin{equation}\label{3.21}
\widetilde b\overline \nu -\widetilde \rho ^2+\xi (\widetilde \rho )+\widetilde \rho \widetilde {\tau }(\xi )=0,
\end{equation}
\begin{equation}\label{3.22}
PX(\widetilde \rho )+\widetilde \rho \widetilde {\tau }(PX)=0, \quad \forall X\in \Gamma (\widetilde {TM}),
\end{equation}
where $\widetilde b$ is the function from \eqref{3.19}.
Moreover, the curvature tensor $\widetilde R$ and the Ricci tensor $\widetilde {\rm Ric}$ of $(M,\widetilde g)$ are given by 
\begin{equation}\label{3.23}
\begin{array}{ll}
\widetilde R(X,Y,Z)=\displaystyle{\left(\overline \nu -\frac{\widetilde \rho ^2}{\widetilde b}\right)\left[\widetilde g(X,Z)
\overline J(PY)-\widetilde g(Y,Z)\overline J(PX)\right]}\\
\qquad \qquad \quad +\overline \nu \left[\overline {\widetilde g}(X,\overline JZ)Y-
\overline {\widetilde g}(Y,\overline JZ)X\right],
\end{array}
\end{equation}
\begin{equation}\label{3.24}
\widetilde {\rm Ric}(X,Y)=2(n-1)\overline \nu g(X,Y)-
\displaystyle{\left(\overline \nu -\frac{\widetilde \rho ^2}{\widetilde b}\right)g(PX,PY)},
\end{equation}
for any $X, Y, Z\in \Gamma (\widetilde {TM})$.
\end{theorem}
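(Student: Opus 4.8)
The plan is to rerun the proof of Theorem~\ref{thm 3.2} for $(M,\widetilde g)$ with all objects taken relative to the second Norden metric $\overline{\widetilde g}$, feeding in the dual curvature decomposition of Remark~\ref{rem 3.1}. Since $\overline M$ is K\"ahler-Norden, $\overline\nabla=\overline{\widetilde\nabla}$, so the Gauss--Weingarten apparatus \eqref{2.5}--\eqref{2.16}, the umbilicity \eqref{3.20}, the normalization \eqref{3.19} and the analogue of \eqref{3.16}, namely $A_NPX=\frac{\widetilde\rho}{\widetilde b}\overline J(PX)$, all carry over under the substitution $g,\rho,b,\tau,\overline g\rightsquigarrow\widetilde g,\widetilde\rho,\widetilde b,\widetilde\tau,\overline{\widetilde g}$. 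The only input that genuinely changes is the ambient curvature: by Remark~\ref{rem 3.1}, $\overline{\widetilde R}=-\overline{\widetilde\nu}\,[\overline{\widetilde\pi}_1-\overline{\widetilde\pi}_2]+\overline\nu\,\overline{\widetilde\pi}_3$, so relative to $\overline{\widetilde g}$ the manifold $\overline M$ has constant totally real sectional curvatures $-\overline{\widetilde\nu}$ and $\overline\nu$.

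First I would substitute this decomposition together with \eqref{3.20} into the Gauss equation \eqref{2.16} for $(M,\widetilde g)$ and apply the analogue of \eqref{2.12}, obtaining the counterpart of \eqref{3.12}. Evaluating it on $PX,\xi,PZ$ and using the non-degeneracy of $S(\widetilde{TM})$ splits the identity into its $PZ$- and $\overline J(PZ)$-parts, which are independent because $S(\widetilde{TM})$ is holomorphic. Since $\widetilde b\neq0$, the $\overline J(PZ)$-part forces the coefficient $-\overline{\widetilde\nu}$ to vanish, i.e. $\overline{\widetilde\nu}=0$, while the $PZ$-part is exactly \eqref{3.21}. Evaluating the same identity on screen vectors $PX,PY,PZ$ and repeating the collinearity argument of Theorem~\ref{thm 3.2}, valid because $\dim S(\widetilde{TM})=2n-2\geq2$, yields \eqref{3.22}. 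With $\overline{\widetilde\nu}=0$ one has $\overline{\widetilde R}=\overline\nu\,\overline{\widetilde\pi}_3$; inserting this, the analogue of \eqref{3.16} and \eqref{3.20} back into \eqref{2.16} produces \eqref{3.23}.

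For the Ricci tensor I would begin from \eqref{3.17} written for $(M,\widetilde g)$. The analogue of \eqref{3.16} gives $\mathrm{tr}\,A_N=0$, so the middle terms simplify just as before, and the closedness of $\widetilde\tau$ from \eqref{3.20} makes $R^{(0,2)}$ symmetric, justifying the notation $\widetilde{\rm Ric}$. The one new computation is the ambient Ricci term: the same trace as in \eqref{3.18}, applied to $\overline{\widetilde R}=\overline\nu\,\overline{\widetilde\pi}_3$, gives $-2(n-1)\overline\nu\,\overline{\widetilde g}(X,\overline JY)$, and the operative identity $\overline{\widetilde g}(X,\overline JY)=\overline g(\overline JX,\overline JY)=-\overline g(X,Y)=-g(X,Y)$ converts this into $2(n-1)\overline\nu\,g(X,Y)$. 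Likewise, in the dual the screen term carries the associated metric of $\overline{\widetilde g}$, which is $-\overline g$; so replacing the $\widetilde g$ of \eqref{3.10} by $-g$ turns the second summand into $-\bigl(\overline\nu-\widetilde\rho^2/\widetilde b\bigr)g(PX,PY)$, and \eqref{3.24} follows.

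The algebra is routine; the single point demanding care---and where a blind symbol-swap $\overline g\rightsquigarrow\overline{\widetilde g}$ would fail---is the sign bookkeeping encoded in \eqref{3.4} (in particular $\overline{\widetilde\pi}_3=-\overline\pi_3$) and in the minus sign on the first bracket of Remark~\ref{rem 3.1}. Equivalently, the associated metric of $\overline{\widetilde g}$ is $-\overline g$ rather than $\overline g$, so the identity $\overline{\widetilde g}(X,\overline JY)=-g(X,Y)$ must be propagated consistently; this is precisely what flips both signs of \eqref{3.24} relative to \eqref{3.10}, while leaving \eqref{3.21}--\eqref{3.23} in direct correspondence with \eqref{3.7}--\eqref{3.9}.
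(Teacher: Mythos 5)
Your proposal is correct and is exactly the route the paper takes: the paper proves Theorem \ref{thm 3.3} only by the one-line remark that it ``follows in a similar way as Theorem \ref{thm 3.2}'' using Remark \ref{rem 3.1}, which is precisely your substitution $\overline\nu'=-\overline{\widetilde\nu}$, $\overline{\widetilde\nu}'=\overline\nu$ fed into the Theorem \ref{thm 3.2} template. Your explicit tracking of the sign flips coming from $\overline{\widetilde\pi}_3=-\overline\pi_3$ and from the fact that the associated metric of $\overline{\widetilde g}$ is $-\overline g$ (which produces the signs in \eqref{3.24}) supplies the bookkeeping the paper leaves implicit, and it checks out against \eqref{3.21}--\eqref{3.24}.
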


\section{Proof of Theorem \ref{thm 1.1}}
\begin{proof}
Let $(M,g)$ be a totally umbilical radical transversal lightlike hypersurface of $\overline M$. From Theorem \ref{thm 3.2} it
follows that $\overline \nu =0$. Denote $\displaystyle{a=\overline {\widetilde \nu}-\frac{\rho^2}{b}}$.
\par
$(i)\Longleftrightarrow (iii)$ A lightlike hypersurface $M$ is said to be {\it semi-symmetric} \cite{D-S} if \\
$({\cal R}(X,Y)\cdot R)(U,V,W)=0$, where
\begin{equation}\label{4.1}
\begin{array}{ll}
({\cal R}(X,Y)\cdot R)(U,V,W)=R(X,Y,R(U,V,W))-R(U,V,R(X,Y,W)) \\
-R(R(X,Y,U),V,W)-R(U,R(X,Y,V),W), \quad \forall X, Y, U, V, W\in \Gamma (TM).
\end{array}
\end{equation}
By using \eqref{3.9} and \eqref{4.1} we obtain
$$
\begin{array}{lllll}
({\cal R}(X,Y)\cdot R)(U,V,W)=a\left\{g(V,W)\left[ag(Y,\overline J(PU))-\overline {\widetilde \nu}
\overline g(Y,\overline JU)\right]\right. \\
\qquad \qquad \qquad \qquad \qquad\left.-g(U,W)\left[ag(Y,\overline J(PV))-\overline {\widetilde \nu}
\overline g(Y,\overline JV)\right]\right\}\overline J(PX)\\
\qquad \qquad \qquad \qquad \qquad -a\left\{g(V,W)\left[ag(X,\overline J(PU))-\overline {\widetilde \nu}
\overline g(X,\overline JU)\right]\right. \\
\qquad \qquad \qquad \qquad \qquad\left.-g(U,W)\left[ag(X,\overline J(PV))-\overline {\widetilde \nu}
\overline g(X,\overline JV)\right]\right\}\overline J(PY)\\
\qquad \qquad \qquad \qquad \qquad +a\left[g(U,W)g(V,Y)-g(V,W)g(U,Y)\right]\left[\overline {\widetilde \nu}X-aPX\right]\\
\qquad \qquad \qquad \qquad \qquad -a\left[g(U,W)g(V,X)-g(V,W)g(U,X)\right]\left[\overline {\widetilde \nu}Y-aPY\right]\\

+a\left\{g(Y,U)\left[ag(W,\overline J(PX))-\overline {\widetilde \nu}
\overline g(W,\overline JX)\right]
-g(X,U)\left[ag(W,\overline J(PY))-\overline {\widetilde \nu}
\overline g(W,\overline JY)\right]\right.\\
\qquad \qquad \qquad \qquad \qquad \left.+g(Y,W)\left[ag(U,\overline J(PX))-\overline {\widetilde \nu}
\overline g(U,\overline JX)\right]\right. \\
\qquad \qquad \qquad \qquad \qquad \left.-g(X,W)\left[ag(U,\overline J(PY))-\overline {\widetilde \nu}
\overline g(U,\overline JY)\right]\right\}\overline J(PV)\\
-a\left\{g(Y,V)\left[ag(W,\overline J(PX))-\overline {\widetilde \nu}
\overline g(W,\overline JX)\right]
-g(X,V)\left[ag(W,\overline J(PY))-\overline {\widetilde \nu}
\overline g(W,\overline JY)\right]\right.\\
\qquad \qquad \qquad \qquad \qquad \left.+g(Y,W)\left[ag(V,\overline J(PX))-\overline {\widetilde \nu}
\overline g(V,\overline JX)\right]\right. \\
\qquad \qquad \qquad \qquad \qquad \left.-g(X,W)\left[ag(V,\overline J(PY))-\overline {\widetilde \nu}
\overline g(V,\overline JY)\right]\right\}\overline J(PU).
\end{array}
$$
Assume that $(M,g)$ is semi-symmetric. Then $({\cal R}(X,Y)\cdot R)(U,V,W)=0$ for any $X, Y, U, V, W\in \Gamma (TM)$.
If we take $X=PX, Y=\xi , U=\xi , V=PV, W=PW$ and by using \eqref{2.17} we get $ab\overline {\widetilde \nu}\left[-g(PV,PW)
\overline J(PX)+g(PX,PW)\overline J(PV)\right]=0$ for any $X, V, W\in \Gamma (TM)$. 
Suppose $a\neq 0$. Since $b \neq 0$ and $\overline {\widetilde \nu} \neq 0$  it follows that \\ 
$g(PV,PW)\overline J(PX)=g(PX,PW)\overline J(PV)$ for any
$X, V, W\in \Gamma (TM)$. Applying $\overline J$ to the last equality and replacing $W$ by $V$ we have 
\begin{equation}\label{4.11}
g(PV,PV)PX=g(PX,PV)PV.
\end{equation}
Suppose there exists a vector field $V_0\in \Gamma (TM)$ such that $g({PV}_0,{PV}_0)\neq 0$ at some point $p\in M$.
Then from \eqref{4.11} it follows that all vectors from $S(TM)$ are collinear with $PV_0$. This is a 
contradiction because ${\rm dim}S(TM)=2n-2\geq 2$. Hence $a=0$, i.e. $\displaystyle{\overline {\widetilde \nu}=\frac{\rho^2}{b}}$
at any point $p\in M$. 
Conversely, substituting $a=0$ in $({\cal R}(X,Y)\cdot R)(U,V,W)$, we obtain $(i)$.
\par
$(ii)\Longleftrightarrow (iii)$ A lightlike hypersurface $M$ is said to be {\it Ricci semi-symmetric} \cite{D-S} if 
$({\cal R}(X,Y)\cdot {\rm Ric})(X_1,X_2)=0$, where
\begin{equation}\label{4.2}
({\cal R}(X,Y)\cdot {\rm Ric})(X_1,X_2)=-{\rm Ric}(R(X,Y,X_1),X_2)-{\rm Ric}(X_1,R(X,Y,X_2)),  
\end{equation}
for any $X, Y, X_1, X_2\in \Gamma (TM)$. By using \eqref{3.9}, \eqref{3.10}, \eqref{4.2} and \eqref{2.18} we have
\begin{equation}\label{4.14}
\begin{array}{lrr}
({\cal R}(X,Y)\cdot {\rm Ric})(X_1,X_2)\\
=-ab\overline {\widetilde \nu}\eta (X_1)\left[\eta (X)g(Y,\overline J(PX_2))-
\eta (Y)g(X,\overline J(PX_2))\right]\\ 
-ab\overline {\widetilde \nu}\eta (X_2)\left[\eta (X)g(Y,\overline J(PX_1))-
\eta (Y)g(X,\overline J(PX_1))\right].
\end{array}
\end{equation}
Assume that $(M,g)$ is Ricci semi-symmetric. Then $({\cal R}(X,Y)\cdot {\rm Ric})(X_1,X_2)=0$ for any 
$X, Y, X_1, X_2\in \Gamma (TM)$. Taking $X=\xi , Y=PY, X_1=PX_1, X_2=\xi $ we obtain
$ab\overline {\widetilde \nu}g(PY,\overline J(PX_1))=0$. From the last equality, taking into account that 
$b\neq 0$, $\overline {\widetilde \nu}\neq 0$ and that $S(TM)$ is non-degenerate , it follows that $a=0$.
The implication $(iii)\Longrightarrow (ii)$ follows from \eqref{4.14}.
\par
Now, let $\overline {\widetilde \nu}$ be a constant. We note that according to Theorem \ref{thm 3.1} 
$\overline \nu $ and $\overline {\widetilde \nu}$ are always constants if $\overline M$ is connected and 
${\rm dim} \overline M=2n\geq 6$.
\par
$(iv)\Longleftrightarrow (iii)$ A lightlike hypersurface $M$ is said to be {\it locally symmetric} \cite{D-S} if \\
$(\nabla _UR)(X,Y,Z)=0$, where
\begin{equation}\label{4.111}
\begin{array}{ll}
(\nabla _UR)(X,Y,Z)=\nabla _UR(X,Y,Z)-R(\nabla _UX,Y,Z)-R(X,\nabla _UY,Z) \\
\qquad \qquad \qquad \quad -R(X,Y,\nabla _UZ), \quad \forall X, Y, Z, U\in \Gamma (TM).
\end{array}
\end{equation}
By using \eqref{3.9}, \eqref{4.111} we obtain
\begin{equation}\label{4.4}
\begin{array}{lll}
(\nabla _UR)(X,Y,Z)=-\left[a(\nabla _Ug)(Y,Z)+g(Y,Z)U(a)\right]\overline J(PX)\\
+\left[a(\nabla _Ug)(X,Z)+g(X,Z)U(a)\right]\overline J(PY)-\overline {\widetilde \nu}
\left[U(\overline g(Y,\overline JZ))-\overline g(\nabla _UY,\overline JZ)\right. \\
\left.-\overline g(Y,\overline J(\nabla _UZ))\right]X+
\overline {\widetilde \nu}\left[U(\overline g(X,\overline JZ))-\overline g(\nabla _UX,\overline JZ)
-\overline g(X,\overline J(\nabla _UZ))\right]Y\\
\qquad \qquad \qquad \qquad -ag(Y,Z)\left[\nabla _U\overline J(PX)-\overline J(P(\nabla _UX))\right]\\
\qquad \qquad \qquad \qquad +ag(X,Z)\left[\nabla _U\overline J(PY)-\overline J(P(\nabla _UY))\right].
\end{array}
\end{equation}
Consider the following expressions from \eqref{4.4} 
$$
\begin{array}{ll}
(A)=a(\nabla _Ug)(X,Z)+g(X,Z)U(a), \\
(B)=U(\overline g(X,\overline JZ))-\overline g(\nabla _UX,\overline JZ)-\overline g(X,\overline J(\nabla _UZ)),\\
(C)=\nabla _U\overline J(PX)-\overline J(P(\nabla _UX)).
\end{array}
$$
For the expression $(A)$ by using \eqref{2.12} and \eqref{2.14} we get
\begin{equation}\label{4.5}
(A)=a\rho \left[g(U,X)\eta (Z)+g(U,Z)\eta (X)\right]+g(X,Z)U(a).
\end{equation}
Taking into account \eqref{2.5} and the fact that $\overline \nabla $ is a metric connection, the expression $(B)$ becomes 
$$
(B)=\overline g((\overline \nabla _U\overline J)Z,X)-\overline g((\overline \nabla _U\overline J)X,Z)+
B(U,X)\overline g(Z,\overline JN)+B(U,Z)\overline g(X,\overline JN).
$$
Then \eqref{2.17} and $\overline \nabla \overline J=0$ imply that $(B)$ vanishes for any $X, Z, U\in \Gamma (TM)$.
By using $\overline J(PX)\in S(TM)$, \eqref{2.7}, \eqref{2.14} and \eqref{2.19}, for the first term of $(C)$ we have
\begin{equation}\label{4.6}
\nabla _U\overline J(PX)=\nabla ^*_U\overline J(PX)-\frac{\rho }{b}g(U,X)\xi .
\end{equation}
According to \eqref{2.7}, \eqref{2.8} and \eqref{2.15}, the second term of $(C)$ becomes
\begin{equation}\label{4.7}
\overline J(P(\nabla _UX))=\overline J(\nabla ^*_UPX)-\rho \eta (X)\overline J(PU).
\end{equation}
From \eqref{4.6}, \eqref{4.7} and \eqref{2.20} we obtain
\begin{equation}\label{4.8}
(C)=-\frac{\rho }{b}g(U,X)\xi +\rho \eta (X)\overline J(PU).
\end{equation}
Now, \eqref{4.5}, $(B)=0$ and \eqref{4.8} imply
\begin{equation}\label{4.9}
\begin{array}{llll}
(\nabla _UR)(X,Y,Z)=-\left\{a\rho \left[g(U,Y)\eta (Z)+g(U,Z)\eta (Y)\right]\right. \\
\left.+g(Y,Z)U(a)\right\}\overline J(PX)+
\left\{a\rho \left[g(U,X)\eta (Z)+g(U,Z)\eta (X)\right]\right. \\
\left.+g(X,Z)U(a)\right\}\overline J(PY)-ag(Y,Z)\left[-\displaystyle{\frac{\rho }{b}}g(U,X)\xi +
\rho \eta (X)\overline J(PU)\right] \\
+ag(X,Z)\left[-\displaystyle{\frac{\rho }{b}}g(U,Y)\xi +\rho \eta (Y)\overline J(PU)\right].
\end{array}
\end{equation}
Assume that $(M,g)$ is locally symmetric. Then for any $X, Y, Z, U\in \Gamma (TM)$ we have
$(\nabla _UR)(X,Y,Z)=0$. Taking $Z=\xi $ in \eqref{4.9} we obtain
%\begin{equation}\label{4.10}
$$
a\rho \left[-g(U,Y)\overline J(PX)+g(U,X)\overline J(PY)\right]=0, \quad \forall X, Y, U\in \Gamma (TM).
$$
Taking into account that $\rho \neq 0$ we conclude that $a=0$ in the same way as in the implication $(i)\Longrightarrow (iii)$.
The implication $(iii)\Longrightarrow (iv)$ follows from \eqref{4.9}.

%\end{equation}
%Suppose $a\neq 0$. Since $\rho \neq 0$, from \eqref{4.10} it follows that  $g(U,Y)\overline J(PX)=g(U,X)\overline J(PY)$ for any
%$X, Y, U\in \Gamma (TM)$. Applying $\overline J$ to the last equality and replacing $U$ by $Y$ we have 
%\begin{equation}\label{4.11}
%g(Y,Y)PX=g(X,Y)PY.
%\end{equation}
%Suppose there exists a vector field $Y_0\in \Gamma (TM)$ such that $g(Y_0,Y_0)\neq 0$ at some point $p\in M$.
%Then from \eqref{4.11} it follows that all vectors from $S(TM)$ are collinear with $PY_0$. This is a 
%contradiction because ${\rm dim}S(TM)=2n-2\geq 2$. Hence $a=0$, i.e. $\displaystyle{\overline {\widetilde \nu}=\frac{\rho^2}{b}}$
%at any point $p\in M$. 
%The implication $(iii)\Longrightarrow (iv)$ follows from \eqref{4.9}.  
\par
We say that a lightlike hypersurface $M$ of an almost complex manifold
with Norden metric $(\overline M,\overline J,\overline g,\overline {\widetilde g})$ is {\it almost Einstein} if 
${\rm Ric}=kg+c\widetilde g$, where $k$ and $c$ are constants and $g$, $\widetilde g$ are the induced metrics on $M$
by $\overline g$, $\overline {\widetilde g}$, respectively.
\par
$(v)\Longleftrightarrow (iii)$ Let $(M,g)$ be almost Einstein. Then by using \eqref{3.10} we have
$$
kg(X,Y)+c\widetilde g(X,Y)=-2(n-1)\overline {\widetilde \nu }\widetilde g(X,Y)+a\widetilde g(PX,PY), \quad X, Y\in
\Gamma (TM).
$$
If we replace $X$ and $Y$ from the above equality by $PX$ and $PY$, respectively, we obtain
\begin{equation}\label{4.17}
kg(PX,PY)+\left[c+2(n-1)\overline {\widetilde \nu }-a\right]g(PX,\overline J(PY))=0.
\end{equation} 
Since $S(TM)$ is non-degenerate, from \eqref{4.17} it follows that
\begin{equation}\label{4.18}
\displaystyle{kPX+\left[c+(2n-3)\overline {\widetilde \nu }+\frac{\rho ^2}{b}\right]\overline J(PX)=0}.
\end{equation}
Because $PX$ and $\overline J(PX)$ are linearly independent, \eqref{4.18} implies that $k=0$ and \\
$\displaystyle{c+(2n-3)\overline {\widetilde \nu }+\frac{\rho ^2}{b}=0}$. The last equality is equivalent to
$\displaystyle{b=-\frac{\rho ^2}{c+(2n-3)\overline {\widetilde \nu }}}$. Since $\overline {\widetilde \nu }$ is a constant and 
by using \eqref{2.21},  we find 
$\displaystyle{\tau (\xi )=-\frac{1}{\rho }\xi (\rho )}$. Substituting $\tau (\xi )$ in \eqref{3.7} we obtain
$\displaystyle{\overline {\widetilde \nu }=\frac{\rho ^2}{b}}$. The implication $(iii)\Longrightarrow (v)$ follows 
from \eqref{3.10}.
\par
Analogously, by using Theorem \ref{thm 3.3}, we can prove the theorem in the case when $(M,\widetilde g)$ is a totally 
umbilical radical transversal lightlike hypersurface of $\overline M$.
\end{proof}

\section{An example of a totally umbilical radical transversal lightlike hypersurface, which is locally symmetric,
semi-symmetric, Ricci semi-symmetric and almost Einstein}
 
\begin{pro}\label{pro 5.1}
Let $(M,g)$  (resp. $(M,\widetilde g)$) be the hypersurface from Theorem \ref{thm 3.2} (resp. Theorem \ref{thm 3.3}). 
If $b$ and $\rho $ (resp. $\widetilde b$ and $\widetilde \rho $) are constants, then 
$(M,g)$ $(resp.\, (M,\widetilde g))$ is locally symmetric, semi-symmetric, Ricci semi-symmetric and almost Einstein.
\end{pro}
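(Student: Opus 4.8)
The plan is to reduce Proposition \ref{pro 5.1} directly to the equivalences already established in Theorem \ref{thm 1.1}, rather than recomputing any curvature expressions. The key observation is that Theorem \ref{thm 1.1} shows that assertion $(iii)$, namely $\overline{\widetilde\nu}=\rho^2/b$, is equivalent to each of semi-symmetry, Ricci semi-symmetry, local symmetry and the almost Einstein condition, provided $\overline{\widetilde\nu}$ is a constant. So the entire task is to verify, under the hypothesis that $b$ and $\rho$ are constants, first that $\overline{\widetilde\nu}$ is indeed a constant, and second that the relation $\overline{\widetilde\nu}=\rho^2/b$ holds. Once $(iii)$ is in force and $\overline{\widetilde\nu}$ is known to be constant, all five properties follow at once from Theorem \ref{thm 1.1}.

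First I would extract the value of $\overline{\widetilde\nu}$ from the partial differential equation \eqref{3.7}, which reads $b\overline{\widetilde\nu}-\rho^2+\xi(\rho)+\rho\tau(\xi)=0$. Since $\rho$ is assumed constant, the term $\xi(\rho)$ vanishes. Moreover, by \eqref{2.21} the $1$-form $\tau$ is given by $\tau(X)=-\tfrac{1}{2b}X(b)$, so with $b$ constant we get $\tau(\xi)=0$ as well. Substituting these two vanishings into \eqref{3.7} collapses it to $b\overline{\widetilde\nu}-\rho^2=0$, which, using $b\neq 0$, yields $\overline{\widetilde\nu}=\rho^2/b$. Because $b$ and $\rho$ are constants, the right-hand side $\rho^2/b$ is a constant, and hence $\overline{\widetilde\nu}$ is a constant. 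This simultaneously establishes assertion $(iii)$ and the constancy of $\overline{\widetilde\nu}$ needed to invoke the full conclusion of Theorem \ref{thm 1.1}.

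With $(iii)$ verified and $\overline{\widetilde\nu}$ constant, I would then simply cite Theorem \ref{thm 1.1}: under these hypotheses the assertions $(i)$ through $(v)$ are all equivalent, so $(M,g)$ is semi-symmetric, Ricci semi-symmetric, locally symmetric and almost Einstein. I do not expect any serious obstacle here; the proposition is essentially a corollary of Theorem \ref{thm 1.1} whose only content is checking that the constancy of $b$ and $\rho$ forces the characterizing equality $a:=\overline{\widetilde\nu}-\rho^2/b=0$. The only point demanding a little care is the appeal to \eqref{2.21} to kill $\tau(\xi)$; I would make sure to state explicitly that constancy of $b$ gives $X(b)=0$ for all $X$, hence $\tau\equiv 0$ and in particular $\tau(\xi)=0$. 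The case of $(M,\widetilde g)$ with $\widetilde b$ and $\widetilde\rho$ constant is handled identically, replacing \eqref{3.7} and \eqref{2.21} by \eqref{3.21} and the second relation in \eqref{3.20}, and concluding $\overline\nu=\widetilde\rho^2/\widetilde b$ via Theorem \ref{thm 3.3} and the corresponding version of Theorem \ref{thm 1.1}.
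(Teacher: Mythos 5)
Your proposal is correct and follows essentially the same route as the paper: the paper likewise substitutes $\tau(\xi)=-\tfrac{1}{2b}\xi(b)=0$ from \eqref{2.21} and $\xi(\rho)=0$ into \eqref{3.7} to get $\overline{\widetilde\nu}=\rho^2/b$ (a constant), and then invokes Theorem \ref{thm 1.1}; you have merely spelled out the intermediate cancellations more explicitly.
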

\begin{proof}
By using \eqref{2.21} (resp. \eqref{3.20}), from \eqref{3.7} (resp. \eqref{3.21}) we obtain 
$\displaystyle{\overline {\widetilde \nu }=\frac{\rho ^2}{b}}$ 
$\left(resp.\, \, \displaystyle{\overline \nu=\frac{\widetilde {\rho}^2}{\widetilde b}}\right)$. Then our assertion follows from
Theorem \ref{thm 1.1}.
\end{proof}
 
Further, we construct a K\"ahler-Norden manifold of constant totally real sectional curvatures and its hypersurface, which 
satisfies the conditions of Proposition \ref{pro 5.1}.
\begin{example}\label{exa 5.1}
Consider the Lie group $\overline G$ of all $(2\times 2)$ complex upper triangular matrices with a determinant 1, i.e.
$\overline G=\left\{
\left(\begin{array}{ll}
z_1 & z_2
\cr 0 & z_1^{-1} 
\end{array}\right) : z_1\in \mathbb{C}\backslash\{0\}, z_2\in \mathbb{C}\right\}$. The real Lie algebra 
$\overline {\mathfrak {g}}$ of $\overline G$ 
consists of all $(2\times 2)$ complex upper triangular traceless matrices, i.e.\\
$\overline {\mathfrak {g}}=\left\{
\left(\begin{array}{lr}
w_1 & w_2
\cr 0 & -w_1 
\end{array}\right) : w_1, w_2\in \mathbb{C}\right\}$. The Lie algebra $\overline {\mathfrak {g}}$ is spanned by the left
invariant vector fields $\{X_1,X_2,X_3,X_4\}$, where
$$
%\hspace*{-4mm}
X_1=\left(\begin{array}{lr}
i & 0
\cr 0 & -i 
\end{array}\right) , \, \,
X_2=\left(\begin{array}{ll}
0 & i
\cr 0 & 0 
\end{array}\right) , \, \,
X_3=\left(\begin{array}{lr}
1 & 0
\cr 0 & -1 
\end{array}\right) , \, \,
X_4=\left(\begin{array}{ll}
0 & 1
\cr 0 & 0 
\end{array}\right) .
$$
The non-zero Lie brackets of the basic vector fields are given by
\begin{equation}\label{5.1}
\left[X_1,X_2\right]=-[X_3,X_4]=-2X_4\, ;\qquad \left[X_1,X_4\right]=-[X_2,X_3]=2X_2\, .
\end{equation}
Since $\overline {\mathfrak {g}}$ is a complex Lie algebra, we define a complex structure $\overline J$ on 
$\overline {\mathfrak {g}}$ by $\overline J\overline X=-i\overline X$ 
for any left invariant vector field $\overline X$ belonging to $\overline {\mathfrak {g}}$. Hence we have 
$[\overline J\overline X,\overline Y]
=\overline J[\overline X,\overline Y]$ for any $\overline X,\overline Y\in \overline {\mathfrak {g}}$, i.e.
$\overline J$ is a bi-invariant complex structure. Now, we define a left invariant metric $\overline g$ on 
$\overline {\mathfrak {g}}$ by
\begin{equation}\label{5.2}
\begin{array}{ll}
\overline g(X_i,X_i)=-\overline g(X_j,X_j)=1; \quad  i=1,2; \quad j=3,4; \\
\overline g(X_i,X_j)=0; \quad i\neq j; \quad i,j=1,2,3,4.
\end{array}
\end{equation}
From $\overline JX_1=X_3, \, \overline JX_2=X_4$ it follows that the introduced metric is a Norden metric on $\overline G$. Thus
$(\overline G,\overline J,\overline g,\overline {\widetilde g})$ is a 4-dimensional complex manifold with Norden metric. Since
the metric $\overline g$ is left invariant, for the Levi-Civita connection $\overline \nabla $ of $\overline g$ we have
\begin{equation}\label{5.3}
2\overline g(\overline \nabla _{\overline X}\overline Y,\overline Z)=\overline g
([\overline X,\overline Y],\overline Z)+\overline g([\overline Z,\overline X],\overline Y)+
\overline g([\overline Z,\overline Y],\overline X)
\end{equation}
for any $\overline X, \overline Y, \overline Z\in \overline {\mathfrak {g}}$. By using the fact that $\overline J$ is bi-invariant and \eqref{5.3} we get $F(\overline X,\overline Y,\overline Z)=0$. Hence 
$(\overline G,\overline J,\overline g,\overline {\widetilde g})$ is a K\"ahler-Norden manifold. Next, by using \eqref{5.1},
\eqref{5.2} and \eqref{5.3} we obtain the components of $\overline \nabla $. For the non-zero of them we have
\begin{equation}\label{5.4}
\begin{array}{ll}
\overline \nabla _{X_2}X_1=-\overline \nabla _{X_4}X_3=2X_4 \, ;  \quad 
\overline \nabla _{X_2}X_2=-\overline \nabla _{X_4}X_4=-2X_3 \, ; 
\\
\overline \nabla _{X_2}X_3=\overline \nabla _{X_4}X_1=-2X_2 \, ;  \quad 
\overline \nabla _{X_2}X_4=\overline \nabla _{X_4}X_2=2X_1 \, .
\end{array}
\end{equation}
We denote $\overline R_{ijkl}=\overline R(X_i,X_j,X_k,X_l)$, $(i,j,k,l=1,2,3,4)$, where $\overline R$ is the curvature tensor
on $\overline G$. Taking into account \eqref{5.4} we get the non-zero components of $\overline R$
\begin{equation}\label{5.5}
\overline R_{1441}=\overline R_{2332}=\overline R_{1423}=-\overline R_{1221}=-\overline R_{3443}=-\overline R_{1234}=-4 .
\end{equation}
By using \eqref{3.3}, \eqref{5.2} and \eqref{5.5} we find $\overline R(\overline X,\overline Y,\overline Z,\overline W)=4
\left\{(\overline \pi _1-\overline \pi _2)(\overline X,\overline Y,\overline Z,\overline W)\right\}$ for any 
$\overline X,\overline Y,\overline Z,\overline W\in \overline {\mathfrak {g}}$. Then from Theorem \ref{thm 3.1} it follows that 
$\overline G$ is of constant totally real sectional curvatures $\overline \nu =4$ and $\overline {\widetilde \nu }=0$ with 
respect to $\overline g$. Further we consider the Lie subalgebra $\mathfrak {g}$ of $\overline {\mathfrak {g}}$ which is
spanned by $\{X_2,X_3,X_4\}$. The corresponding to $\mathfrak {g}$ Lie subgroup $G$ of $\overline G$ is given by 
$G=\left\{
\left(\begin{array}{ll}
a & z
\cr 0 & a^{-1} 
\end{array}\right) : a\in \mathbb{R}\backslash\{0\}, z\in \mathbb{C}\right\}$. Denote by $g$ and $\widetilde g$ 
the induced metrics on $G$ of
$\overline g$ and $\overline {\widetilde g}$, respectively. Hence $(G,g)$ and $(G,\widetilde g)$ are hypersurfaces of
$\overline G$. By using $\overline JX_1=X_3, \overline JX_2=X_4, \overline JX_3=-X_1, \overline JX_4=-X_2$ and \eqref{5.2} 
we get
\begin{equation}\label{5.6}
\begin{array}{ll}
\widetilde g(X_2,X_2)=\widetilde g(X_3,X_3)=\widetilde g(X_4,X_4)=\widetilde g(X_2,X_3)=\widetilde g(X_3,X_4)=0; \\
\widetilde g(X_2,X_4)=-1
\end{array}
\end{equation}
and
\begin{equation}\label{5.7}
\overline {\widetilde g}(X_1,X_1)=\overline {\widetilde g}(X_1,X_2)=\overline {\widetilde g}(X_1,X_4)=0; \quad
\overline {\widetilde g}(X_1,X_3)=-1.
\end{equation}
The equalities \eqref{5.6} imply that $(G,\widetilde g)$ is a lightlike hypersurface of $\overline G$. Taking into account 
\eqref{5.7} it is easy to check that the normal space ${\mathfrak {g}}^{\widetilde {\bot }}$ of $\mathfrak {g}$ with respect to
$\overline {\widetilde g}$ is spanned by $\{\xi =-X_3\}$ and the screen distribution $S(\mathfrak {g})$ is spanned by
$\{X_2,X_4\}$. The pair $\{N=X_1,\xi \}$ satisfies the conditions \eqref{3.19}. Since
$\overline J\xi =N$, we have $\widetilde b=1$. Hence $(G,\widetilde g)$ is a radical transversal lightlike hypersurface of $\overline G$. As the Levi-Civita connections $\overline \nabla $ and $\overline {\widetilde \nabla }$ on $\overline G$ coincide, from \eqref{5.4}
we obtain
\begin{equation}\label{5.8}
\overline {\widetilde \nabla }_\xi \xi =0, \quad \overline {\widetilde \nabla }_{X_2} \xi =2X_2, \quad
\overline {\widetilde \nabla }_{X_4} \xi =2X_4 .
\end{equation}
Now, \eqref{2.8} and \eqref{5.8} imply that $A^*_\xi X=A^*_\xi PX=-2PX$ for any $X\in \mathfrak {g}$, i.e. $(G,\widetilde g)$
is totally umbilical and $\widetilde \rho =-2$. 
Then, according to Proposition \ref{pro 5.1}, the totally umbilical radical transversal lightlike hypersurface 
$(G,\widetilde g)$ of $\overline G$ is locally symmetric, semi-symmetric, Ricci semi-symmetric and almost Einstein. 
\end{example}

\end{document}